\documentclass[a4paper,reqno]{amsart}
 \usepackage{amsmath,amssymb,amsthm,latexsym}
 \tolerance 3000
 \theoremstyle{definition}
 \newtheorem{ddd}{Definition}
 \theoremstyle{plain}
 \newtheorem{ttt}[ddd]{Theorem}
 \newtheorem{llll}[ddd]{Lemma}
 \newcommand{\mdeg}{\mathrm{mdeg}}
 \newcommand{\supp}{\mathrm{supp}}
 \begin{document}
  \title[Splitting partially commutative Lie algebras]{Splitting partially commutative
  Lie algebras \\
  into direct sums}
  \author{E.\,N.\,Poroshenko}
  \date{}
  \maketitle

   \noindent
  {\bfseries Abstract.} {\small
    In this work, we prove that partially commutative, partially commutative metabelian, or
    partially commutative nilpotent Lie algebra splits into the direct sum of two subalgebras
    if and only if the defining graph
    $G$ of this algebra is such that
    $\overline{G}$ is not connected.}

  \section{Preliminaries}
    Research into partially commutative structures started in late sixties of
    the last century. In
\cite{CF69}, the notion of a partially commutative monoid was introduced. Further,
    partially commutative groups (also known as right-angled Artin groups) were studied
    the most heavily among all partially commutative structures (see, for example
\cite{Se89,DK92,DKR07}). There are so many papers on partially commutative groups that
    it is impossible to list all of them here. But quite many papers are mentioned in
    the surveys
\cite{Cha07,PT21}.

    Partially commutative algebras (both associative and Lie algebras) were studied
    not so actively. But there are also some results obtained for them, see for example
\cite{PT21}.

    Splitting an algebra to a direct sum is a very important tool
    for an investigation into this algebra. If an algebra can be decomposed to
    a direct sum then one can study each summand separately which is obviously easier than
    studying the entire algebra. In this paper, we find a criterion for partially
    commutative Lie algebras in some varieties to be
    decomposable into a direct sum. This is more or less an analogue of the result
    for partially commutative groups obtained in
\cite{Tim21} for a certain class of varieties of groups.

    In this paper by a graph we mean
    an undirected graph without loops. Denote such graph by
    $G=\langle A;E \rangle$, where
    $A=\{a_1,a_2,\dots ,a_n\}$ is the finite set of its vertices and
    $E\subseteq A\times A$ is the set of its edges. We assume below that
    $n \geqslant 2$.

    If vertices
    $a$ and
    $b$ are connected in
    $G$ we write
    $a\leftrightarrow b$. Analogously, if
    $A_1\subseteq A$ and
    $a\in A$ is adjacent to all vertices in
    $A_1$ then we use the notation
    $a \leftrightarrow A_1$. Finally, if
    $A_1,A_2 \subseteq A$ then
    $A_1 \leftrightarrow A_2$ means
    $a_1\leftrightarrow a_2$ for any
    $a_1\in A_1$ and
    $a_2\in A_2$. Let
    $H$ be an arbitrary undirected graph. By
    $V(H)$ and
    $E(H)$ denote the set of the vertices and the set of the edges of this graph
    respectively. Next, let
    $V_1\subseteq V(H)$. By
    $H(V_1)$ denote the subgraph of
    $H$, generated by the set
    $V_1$.

    In this paper, we work with Lie
    $R$-algebras, i.e. Lie algebras over
    $R$, where
    $R$ is a domain and we write just ``Lie algebra'' instead of ``Lie
    $R$-algebra'' for short.

    Consider a variety
    $\mathfrak{M}$ of Lie algebras. A \emph{partially commutative Lie algebra in
    $\mathfrak M$ with a defining graph}
    $G$ is a Lie algebra
    $\mathcal{L}_R(A; G)$ defined by
    $$\mathcal{L}_R(A; G) = \langle A \,|\,  [x_i, x_j]=
      0 \Longleftrightarrow   \{x_i, x_j\} \in E; \,\,\,\,\mathfrak{M} \, \rangle$$
    in
    $\mathfrak{M}$. Thus, in this algebra, the variety identities and the defining
    relations hold together.

    By
    $\mathcal{L}(A;G)$,
    $\mathcal{M}(A;G)$, and
    $\mathcal{N}_m(A;G)$ denote the partially commutative, partially commutative
    metabelian, and partially commutative nilpotent of degree
    $m$ Lie
    $R$-algebras defined by the graph
    $G$ respectively. Let us use the common notation
    $L(A;G)$ for one of the algebras
    $\mathcal{L}(A;G)$,
    $\mathcal{M}(A;G)$, or
    $\mathcal{N}_m(A;G)$. We denote by
    $F(A)$ the absolutely free algebra over
    $R$ with the set of generators
    $A$, i.e. the algebra of non-commutative non-associative polynomials in the set
    $A$ such that these polynomials do not have monomials of cumulative degree 0.

    Denote by
    $[u]$ a non-associative monomial in
    $A$, i.e. a finite product of elements in
    $A$ with a parenthesizing defining the order of multiplications on it.
    \begin{ddd}
     \emph{Multi-degree} of a non-associative monomial
     $\alpha [u]$ is the vector
     $\overline{\delta}=(\delta_1,\delta_2,\dots,\delta_n)$ where
     $\delta_i$ is the number of occurrences of
     $a_i$ in
     $[u]$.
   \end{ddd}

   \begin{ddd}
     A non-zero element
     $g$ of
     $F(A)$ is called \emph{multi-homogeneous} if
     $g$ can be represented as a linear combination of Lie monomials of the same multi-degree
     $\overline{\delta}=(\delta_{1},\delta_2,\dots,\delta_n)$.
   \end{ddd}
   For a multi-homogeneous element
   $g \in F(A)$, if
   $g=\sum_{i=1}^k \alpha_i [u_i]$, where
   $\alpha_i \in R\backslash\{0\}$ and
   $[u_i]$ are non-zero monomials in
   $F(A)$, then set
   $\mdeg(g)=\mdeg([u_i])$, where
   $1\leqslant i\leqslant k$.

   Let
   $[u] \in F(A)$ be a monomial such that
   $\mdeg{[u]}=(\delta_{1},\delta_2,\dots,\delta_{n})$. The set
   $\{a_i\,|\,\delta_i\neq 0\}$ is called a \emph{support} and is denoted by
   $\supp([u])$. For a polynomial
   $g=\sum_{j}\alpha_j [u_j]$ in
   $F(A)$ set
   $\supp(g)=\bigcup_j \supp([u_j])$.

   The notions of the multi-degree and the support can be defined for elements in
   $L(A;G)$ in an obvious way.

   Since identities and relations of
   $L(A;G)$ are multi-homogeneous, the following statement holds. If
   $g_i$'s are multi-homogeneous polynomials of mutually distinct multi-degrees and
   $0=\sum_{i}g_i$ in
   $L(A;G)$ then
   $g_i=0$ in
   $L(A;G)$ for any
   $i$.

   \emph{Length} of a non-associative monomial
   $[u]$ is
   $\sum_{i=1}^n \delta_i$, where
   $(\delta_1,\delta_2, \dots, \delta_n)=\mdeg([u])$. For
   $h\in L(A;G)$ consider a representation of
   $h$ as a linear combination of Lie monomials. Of course, these monomials may have different lengthes. Let
   $i$ be any integer positive number. By
   $\omega_i(h)$ denote the part of this linear combination consisting  of all monomials of length
   $i$. Then
   $h=\sum_{i=1}^r \omega_i(h)$ where
   $r$ is the maximum among lengthes of monomials in this linear combination. Furthermore, set
   $O_k(h)=\sum_{i=1}^k \omega_{i}(h)$. Finally, denote by
   $o_k(h)$ the element
   $h-O_k(h)$.

   Let
   $f,g\in L(A;G)$. We write
   $f\backsim g$ if
   $\alpha f=\beta g$ in
   $L(A;G)$ for some
   $\alpha,\beta \in R\backslash\{0\}$.

   We will use the following theorem on centralizers in
   $\mathcal{L}(A;G)$
\cite{Por12}.
   \begin{ttt}\label{central}
     Let
     $g\in \mathcal{L}(A;G)$,
     $H=\overline{G}(\supp(g))$ and
     $H_1,\dots, H_p$ be connected components of
     $H$. Then
     \begin{enumerate}
       \item
         there is a decomposition
         $g=\sum_{i=1}^p g_i$, where
         $\supp(g_i)= A(H_j)$ for
         $i=1,2,\dots, p$;
       \item
         $C(g)$ consists of all elements of the form
         $h=\sum_{i=1}^p h_i+h'$ such that for any
         $i=1,2,\dots, p$ if
         $h_i\neq 0$ then
         $g_i\backsim h_i$ and if
         $h'\neq 0$  then
         $\supp(g)\leftrightarrow \supp(h')$.
     \end{enumerate}
    \end{ttt}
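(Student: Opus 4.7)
The engine for both parts is the following \emph{commutation lemma}: if $B_1,B_2\subseteq A$ are disjoint with $B_1\leftrightarrow B_2$ in $G$, then every non-associative monomial whose support meets both $B_1$ and $B_2$ vanishes in $\mathcal{L}(A;G)$. I would prove it by induction on length: for a monomial $[[u_1],[u_2]]$, either $\supp([u_i])\subseteq B_i$ for $i=1,2$, in which case the bracket is zero because a short Jacobi argument upgrades commutativity of generators to commutativity of the full subalgebras they generate; or else one of the $[u_i]$ already has mixed support and the inductive hypothesis applies.

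For part (1), distinct components $H_i,H_j$ of $\overline{G}(\supp(g))$ have no $\overline{G}$-edge between them, so $A(H_i)\leftrightarrow A(H_j)$ in $G$. The commutation lemma kills every monomial of $g$ whose support meets two or more $A(H_i)$'s, and grouping the surviving monomials by the unique $A(H_i)$ containing their support produces $g=\sum_{i=1}^p g_i$. Since $\supp(g)=\bigcup_i A(H_i)$ by hypothesis, each vertex of $A(H_i)$ must remain present in $g_i$, so $\supp(g_i)=A(H_i)$.

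The easy direction of (2) is a bilinear expansion: for $h=\sum h_i+h'$ satisfying the stated conditions,
\[
 [g,h]=\sum_{i,j}[g_i,h_j]+\sum_i[g_i,h'].
\]
Off-diagonal terms $[g_i,h_j]$ with $i\neq j$ vanish by the commutation lemma (since $A(H_i)\leftrightarrow A(H_j)$); mixed terms $[g_i,h']$ vanish because $\supp(h')\leftrightarrow\supp(g)\supseteq\supp(g_i)$; and each diagonal term $[g_i,h_i]$ vanishes because $\alpha[g_i,h_i]=[g_i,\beta g_i]=0$ together with $R$ being a domain.

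The forward direction of (2) is the technical heart. Given $h\in C(g)$, I would apply the commutation lemma jointly to $\supp(g)\cup\supp(h)$ to decompose $h=\sum_i h_i+h'$ with $\supp(h_i)\subseteq A(H_i)$ and $\supp(h')\cap\supp(g)=\emptyset$. The adjacency condition $\supp(h')\leftrightarrow\supp(g)$ then comes out of a suitable multi-homogeneous component of $[g,h]=0$: a letter of $\supp(h')$ that is non-adjacent in $G$ to some letter of $\supp(g)$ would give rise to a surviving non-zero bracket in that component. The hard step is the proportionality $h_i\backsim g_i$: inside the subalgebra generated by $A(H_i)$, whose defining complement graph is connected by choice of $H_i$, one must show that the centralizer of $g_i$ reduces, up to cancelling scalars in the domain $R$, to a single $R$-line. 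I expect this to proceed by induction on multi-degree using a normal form for $\mathcal{L}(A;G)$; this is the main obstacle, since every earlier reduction was a formal consequence of the commutation lemma, whereas here one needs genuine structural information about partially commutative Lie algebras whose defining complement graph is connected.
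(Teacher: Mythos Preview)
This theorem is not proved in the paper at all: it is quoted from \cite{Por12} and used as a black box throughout. There is therefore no in-paper argument against which to compare your proposal.

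On the substance of your sketch: the commutation lemma is misstated. Without the extra hypothesis that the monomial's support lies in $B_1\cup B_2$, the conclusion fails --- take $[[a,c],b]$ with $a\in B_1$, $b\in B_2$, $c\notin B_1\cup B_2$ and $a\leftrightarrow b$; by Jacobi this equals $[a,[c,b]]$, which need not vanish. Your inductive dichotomy (``either $\supp([u_i])\subseteq B_i$, or one factor has mixed support'') likewise omits the case where one factor is supported in $B_1$ and the other meets $B_2$ together with letters outside $B_1\cup B_2$. With the containment hypothesis added, the induction goes through and suffices for part~(1) and for the easy direction of~(2). You correctly isolate the forward direction of~(2) --- in particular, the assertion that in the subalgebra on $A(H_i)$, whose complement graph is connected, the centralizer of $g_i$ reduces to its own $\backsim$-class --- as the real content, and you do not prove it but only forecast an induction. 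That step is precisely the main theorem of \cite{Por12}, so the gap you flag is the entire substance of the result.
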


    In metabelian Lie algebras we assume the left-normed parenthesizing and omit all
    parentheses except the outer pair.

    Let us order
    $A$ in an arbitrary way. For any multi-degree
    $\overline{\delta}=(\delta_1,\delta_2, \dots, \delta_n)$ consider the set
    $A(\overline{\delta})=\{a_i\,|\,\delta_i\neq 0\}$. Let
    $H_0,H_1,\dots H_s$ be the connected components of
    $G(A(\overline{\delta}))$. Without loss of generality we can assume that
    the smallest vertex of
    $A(\overline{\delta})$ lies in
    $A(H_0)$. Denote this vertex by
    $b$. Consider the set of all monomials of the form
    $[a_{i_1},a_{i_2},\dots, a_{i_r}]$, where
    \begin{enumerate}
      \item
        $\mdeg([a_{i_1},a_{i_2},\dots, a_{i_r}])=\delta$, in particular
        $r=\sum_{i=1}^n \delta_i$;
      \item
        $a_{i_1}>a_{i_2}$,
        $a_{i_2} \leqslant a_{i_{3}} \leqslant \dots \leqslant a_{i_r}$, so
        $a_{i_2}=b$;
      \item
        $a_{i_1}$ is the largest element of one of the sets
        $A(H_1), A(H_2), \dots, A(H_n)$.
    \end{enumerate}
    Denote this set by
    $B_{\overline{\delta}}(A;G)$ and consider the set
    $\mathfrak{B}(A;G)=\sum B_{\overline{\delta}}(A;G)$. The following theorem holds
\cite{PT13}.
    \begin{ttt}\label{metabbase}
     The set
     $\mathfrak{B}(A;G)$ is a basis of the partially commutative metabelian
     Lie algebra
     $\mathcal{M}(A;G)$.
   \end{ttt}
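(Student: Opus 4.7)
The plan is to establish both halves of the basis claim: $\mathfrak{B}(A;G)$ spans $\mathcal{M}(A;G)$ and it is linearly independent. Since the defining relations and the metabelian identity are multi-homogeneous, it suffices to fix an arbitrary multi-degree $\overline{\delta}$ and prove that the finite set $B_{\overline{\delta}}(A;G)$ is a basis of the corresponding graded component of $\mathcal{M}(A;G)$.

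For spanning, I would first invoke the standard normalization of the free metabelian Lie algebra: using antisymmetry, Jacobi, and the metabelian identity $[[x,y],[z,w]]=0$, every element can be written as a linear combination of left-normed monomials $[a_{i_1},a_{i_2},\ldots,a_{i_r}]$ satisfying conditions 1--2 of the definition of $B_{\overline{\delta}}(A;G)$, i.e.\ the tail $a_{i_2}\leq a_{i_3}\leq\cdots\leq a_{i_r}$ is weakly increasing and $a_{i_1}>a_{i_2}$. For $r\geq 3$ the positions $a_{i_3},\ldots,a_{i_r}$ are freely permutable by the metabelian identity, and the first two slots are controlled by antisymmetry. What remains is to impose conditions 3--4, which depend on the graph $G(A(\overline{\delta}))$. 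I would proceed by induction on a lexicographic parameter attached to the pair $(a_{i_1},a_{i_2})$: how far $a_{i_1}$ is from being the maximum vertex of some non-$H_0$ component, and how far $a_{i_2}$ is from $b$. The elementary reduction step uses the three-term Jacobi identity together with the edge relations $[x,y]=0$: whenever $a_{i_1}$ is adjacent in $G$ to some vertex $c$ appearing in the monomial, the Jacobi rearrangement kills the summand containing $[a_{i_1},c]$ and produces a clean swap of $a_{i_1}$ with $c$ up to sign. Walking along paths in $G(A(\overline{\delta}))$ in this manner drives $a_{i_1}$ first out of $H_0$ and then up to the top of its new component; a parallel walk terminating at $b$ handles $a_{i_2}$.

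For linear independence, I would realize $\mathcal{M}(A;G)$ as the quotient of the free metabelian Lie algebra $M(A)$ by the multi-homogeneous ideal generated by the commutators $[a_i,a_j]$ with $\{a_i,a_j\}\in E$. The derived subalgebra $M(A)'$ is abelian and admits the classical description as a module over the polynomial algebra $R[a_1,\ldots,a_n]$ via a Magnus-type embedding. Under this embedding, the images of the elements of $\mathfrak{B}(A;G)$ of length $\geq 2$ correspond to polynomial multiples of basic commutators of the form $[a_{i_1},b]$, and the conditions defining $B_{\overline{\delta}}(A;G)$ are designed precisely so that these images have pairwise distinct leading monomials under a suitable order on $R[a_1,\ldots,a_n]$. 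Combined with the obvious independence of the length-one elements $a_1,\ldots,a_n$, this yields the claim.

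The main obstacle I anticipate is the combinatorial book-keeping in the spanning step, and specifically the requirement that $a_{i_1}$ be the \emph{largest} vertex of some $A(H_k)$ with $k\geq 1$. Each Jacobi rewrite can generate several correction terms whose shapes must be simultaneously controlled, and the induction parameter must be chosen so that every correction term is strictly smaller. Finding the correct lexicographic order --- one that simultaneously encodes the connected-component structure of $G(A(\overline{\delta}))$, the positions of $a_{i_1}$ and $a_{i_2}$ within their components, and the shape of the tail --- is the delicate part. Once it is in place, the reduction becomes systematic, and the linear-independence step follows fairly mechanically from the module-theoretic picture.
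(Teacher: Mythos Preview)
The paper does not prove this theorem; it is stated in the preliminaries and attributed to reference~\cite{PT13} (Poroshenko--Timoshenko, \emph{J.~Algebra}, 2013), then used as a black box in Lemma~\ref{lmetab}. There is therefore no proof in the paper to compare your proposal against.

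For what it is worth, your outline is along the standard lines for such results: spanning via Jacobi-plus-edge-relation rewriting governed by a well-founded order, and independence via the polynomial-module description of the derived subalgebra of the free metabelian Lie algebra. Two small remarks. First, you refer to ``conditions 3--4'' of the definition of $B_{\overline{\delta}}(A;G)$, but the paper lists only three conditions; be sure you are working from the same normal form. Second, the delicate point you flag --- that the termination measure must simultaneously track the connected component of $a_{i_1}$ in $G(A(\overline{\delta}))$ and its position within that component --- is genuine, and your sketch does not yet specify such an order, so what you have is a plausible plan rather than a proof. If you want to carry it out, the cited paper~\cite{PT13} is where the details live.
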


 \section{Main Part}
   Consider a graph
   $G$ with the set of vertices
   $A=\{a_1,a_2,\dots a_n\}$. Assume that
   $L(A;G)=L_1\oplus L_2$ be a decomposition of
   $L(A;G)$ into a direct sum of two subalgebras.

   For any
   $h\in L(A;G)$ the pair
   $(h_1,h_2)\in L_1\times L_2$ is \emph{corresponding to}
   $h$ if
   $h=h_1+h_2$. For any
   $h$ there is a unique pair corresponding to this element. Denote the elements
   of this pair by
   $(h)_1$ and
   $(h)_2$ respectively.

   \begin{llll}\label{libyi}
     Let
     $G$ be a graph with the set of vertices
     $A=\{a_1,a_2,\dots a_n\}$, where
     $n\geqslant 2$ and let
     $L(A;G)=L_1\oplus L_2$. If
     $a_i \nleftrightarrow a_j$, where
     $a_i,a_j \in A$,
     $i \neq j$, then the following statements hold.
     \begin{enumerate}
       \item
         If
         $L(A;G)=\mathcal{L}(A;G)$ then
         $a_j \not \in \supp((a_i)_1)$.
       \item
         If
         $L(A;G)=\mathcal{N}_m(A;G)$ for
         $m\geqslant 2$ then
         $a_j \not \in \supp(O_{m-1}((a_i)_1))$.
       \item
         If
         $L(A;G)=\mathcal{M}(A;G)$ then
         $a_j \not \in \supp(O_2((a_i)_1))$.
     \end{enumerate}
   \end{llll}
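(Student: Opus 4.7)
The plan is to reduce all three parts to a single centralizer computation carried out inside $\mathcal{L}(A;G)$. Since $L(A;G)=L_1\oplus L_2$ is a direct sum of Lie subalgebras, $[L_1,L_2]=0$, and therefore
\[
  [(a_i)_1,a_i] \;=\; [(a_i)_1,(a_i)_1]+[(a_i)_1,(a_i)_2] \;=\; 0,
\]
so $(a_i)_1\in C(a_i)$. Because every defining relation and every variety identity of $L(A;G)$ is multi-homogeneous, this splits along multi-homogeneous components and in particular $[\omega_k((a_i)_1),a_i]=0$ in $L(A;G)$ for every $k\geq 1$.

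Next, I specialise Theorem~\ref{central} to $g=a_i$ in the free partially commutative algebra $\mathcal{L}(A;G)$. The subgraph $\overline{G}(\{a_i\})$ is a single vertex, hence has one connected component, so the theorem collapses to: every element of $C(a_i)\subseteq\mathcal{L}(A;G)$ has the form $\gamma a_i+h'$ with $\gamma\in R$ and $\supp(h')\subseteq\{a_k : a_k\leftrightarrow a_i\}$. In particular its support is contained in $\{a_i\}\cup\{a_k : a_k\leftrightarrow a_i\}$, a set which excludes $a_j$ thanks to the hypotheses $i\neq j$ and $a_i\nleftrightarrow a_j$. For part (1), this applies to $(a_i)_1$ directly and gives the desired conclusion.

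For parts (2) and (3) Theorem~\ref{central} is only available in $\mathcal{L}(A;G)$, so the multi-homogeneous relation $[\omega_k((a_i)_1),a_i]=0$ has to be pulled back to the free partially commutative algebra. The canonical epimorphism $\mathcal{L}(A;G)\to\mathcal{N}_m(A;G)$ has kernel equal to the length-$\geq m+1$ component of $\mathcal{L}(A;G)$ (the lower central series matches the length filtration), while the kernel of $\mathcal{L}(A;G)\to\mathcal{M}(A;G)$ is contained in the second derived ideal, whose nonzero multi-homogeneous components have length $\geq 4$. Since $[\omega_k((a_i)_1),a_i]$ has length $k+1$, the relation lifts verbatim to $\mathcal{L}(A;G)$ whenever $k+1\leq m$ in case (2), respectively $k+1\leq 3$ in case (3). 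Applying the centralizer statement above to each such $\omega_k((a_i)_1)$ in $\mathcal{L}(A;G)$ gives $\supp(\omega_k((a_i)_1))\subseteq\{a_i\}\cup\{a_k : a_k\leftrightarrow a_i\}$ for $k\leq m-1$, respectively $k\leq 2$, and summing over the admissible $k$ yields $a_j\notin\supp(O_{m-1}((a_i)_1))$ and $a_j\notin\supp(O_2((a_i)_1))$.

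The step which will require the most care is the lift: one must identify the kernel of each quotient as a graded ideal concentrated in sufficiently high lengths, which in the metabelian case rests on the multi-homogeneity of the identity $[[x,y],[z,w]]=0$. The thresholds $m-1$ and $2$ in the statement are precisely the largest values of $k$ for which the equation $[\omega_k((a_i)_1),a_i]=0$ still carries nontrivial information back in $\mathcal{L}(A;G)$, which is why the three parts differ only in how far into $(a_i)_1$ the support restriction reaches.
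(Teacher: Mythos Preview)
Your argument is correct and follows the same route as the paper: obtain $(a_i)_1\in C(a_i)$ from $[L_1,L_2]=0$, invoke Theorem~\ref{central} with $g=a_i$ to bound the support, and for the nilpotent and metabelian cases lift the homogeneous pieces back to $\mathcal{L}(A;G)$ via the degree-bounded bijections (exactly the paper's $\mathcal{L}_m(A;G)\to\mathcal{N}_m(A;G)$ and $\mathcal{L}_3(A;G)\to\mathcal{M}_3(A;G)$). The only cosmetic difference is that the paper writes $(a_i)_1=\alpha_i a_i+g_i$ and computes $[(a_i)_1,(a_i)_2]=[g_i,a_i]=0$, whereas you observe $[(a_i)_1,a_i]=[(a_i)_1,(a_i)_1]+[(a_i)_1,(a_i)_2]=0$ directly; the content is identical.
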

   \begin{proof}
     Let
     $a_i$ be an arbitrary element in
     $A$. Note that if we change
     $(a_i)_1$ by
     $(a_i)_2$ in statements
     (1)--(3) then we obtain the equivalent statements.

     Suppose that
     $(a_i)_1=\alpha_i a_i +g_i$ and
     $a_i \not \in \supp(\omega_1 (g_i))$. Then
     $(a_i)_2=(1-\alpha_i) a_i-g_i$. So,
     $\supp((a_i)_1)=\supp(g_i)\cup\{a_i\}=\supp((a_i)_2)\cup\{a_i\}$.

     Since
     $(a_i)_l \in L_l$ for
     $l=1,2$, we have
     $[(a_i)_1,(a_i)_2]=0$. Therefore,
     \begin{equation} \label{ibyi}
       \begin{split}
         0 =& [\alpha_i a_i+g_i,(1-\alpha_i)a_i-g_i]= \\
           =&-[\alpha_i a_i,g_i]+[g_i,(1-\alpha_i)a_i]=\\
           =&[g_i,a_i].
       \end{split}
     \end{equation}

     Let
     $L(A;E)=\mathcal{L}(A;E)$. Then
\eqref{ibyi} implies by Theorem~%
 \ref{central}  that
     $a_i \leftrightarrow \supp(g_i)$. Since
     $a_j \nleftrightarrow a_i$ we obtain
     $a_j \not \in \supp(g_i)$, and so
     $a_j \not \in \supp((a_i)_1)$. Thus, the first statement holds.

     Denote by
     $\mathcal{L}_k(A;G)$ the subset of
     $\mathcal{L}(A;G)$ consisting of all elements
     $g$ such that
     $o_{k}(g)=0$. Similarly,
     $\mathcal{M}_k(A;G)$ is the subset of
     $\mathcal{M}(A;G)$ such that
     $o_{k}(g)=0$ for any
     $g\in \mathcal{M}_k(A;G)$.

     Obviously, the restriction of the natural homomorphism
     $\mathcal{L}(A;G) \to \mathcal{N}_m(A;G)$ to the map
     $\mathcal{L}_m(A;G) \to \mathcal{N}_m(A;G)$ is a bijection as well as
     the restriction of the natural homomorphism
     $\mathcal{L}(A;G) \to \mathcal{M}(A;G)$ to the map
     $\mathcal{L}_3(A;G) \to \mathcal{M}_3 (A;G)$. Since
     $\mathcal{N}_m(A;G)$ and
     $\mathcal{M}(A;G)$ are homogeneous
\eqref{ibyi} implies
     $[O_k(g_i),a_i]=0$ for any positive integer
     $k$. Consequently, we can use Theorem
\ref{central} to complete the proofs of the second and the third statements similarly
     we did for the first one.
   \end{proof}
   \begin{llll}\label{laioneside}
     Let
     $G$ be a graph with the set of vertices
     $A=\{a_1,a_2,\dots a_n\}$, where
     $n\geqslant 2$. If
     $\overline{G}$ is connected and
     $L(A;G)=L_1\oplus L_2$  then there exists
     $l\in\{1,2\}$ such that for any
     $i\in\{1,2,\dots, n\} $ the equation
     $(a_i)_l=a_i+g_i$, where
     $a_i \not \in \supp(\omega_1(g_i))$, holds.
   \end{llll}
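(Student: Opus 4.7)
The plan is to single out, for each generator $a_i$, the scalar $\alpha_i\in R$ obtained from writing $(a_i)_1=\alpha_i a_i+g_i$ with $a_i\notin\supp(\omega_1(g_i))$ (so that automatically $(a_i)_2=(1-\alpha_i)a_i-g_i$), and to show that there is a single value $\alpha\in\{0,1\}$ with $\alpha_i=\alpha$ for every $i$. Once this is done, one takes $l=1$ if $\alpha=1$ and $l=2$ if $\alpha=0$, and the required equality $(a_i)_l=a_i+g_i'$ holds in either case.

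First I would fix a pair $a_i,a_j$ with $a_i\nleftrightarrow a_j$, i.e.\ an edge of $\overline G$, and exploit that $(a_i)_1\in L_1$ and $(a_j)_2\in L_2$ lie in different summands, so $[(a_i)_1,(a_j)_2]=0$. Taking the length-$2$ component of this identity reduces matters to $[\omega_1((a_i)_1),\omega_1((a_j)_2)]=0$. Lemma~\ref{libyi} supplies, in each of the three variety settings, the crucial support restrictions $a_j\notin\supp(\omega_1((a_i)_1))$ and $a_i\notin\supp(\omega_1((a_j)_2))$. Expanding the bracket and using the linear independence of the length-$2$ monomials $[a_k,a_l]$ with $a_k\nleftrightarrow a_l$ in $L(A;G)$, I would read off the coefficient of $[a_i,a_j]$: the only surviving summand is $\alpha_i(1-\alpha_j)$, whence $\alpha_i(1-\alpha_j)=0$. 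The analogous computation for $[(a_j)_1,(a_i)_2]=0$ yields $\alpha_j(1-\alpha_i)=0$.

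Since $R$ is a domain, a short case analysis on these two equations forces $\alpha_i=\alpha_j\in\{0,1\}$. The connectedness of $\overline G$ then allows this equality to be propagated along any $\overline G$-path joining two given vertices, so that all $\alpha_i$ share a common value $\alpha\in\{0,1\}$, which is exactly what the lemma asserts.

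The main obstacle will be the coefficient extraction in the second step: one must be certain that no other cross term in the expansion of $[\omega_1((a_i)_1),\omega_1((a_j)_2)]$ can contribute a multiple of $[a_i,a_j]$, and this is precisely what the support restrictions from Lemma~\ref{libyi} rule out. This is also the reason the three cases of Lemma~\ref{libyi} are tuned to different lengths, since in each variety one needs enough control over the low-length parts to carry out this extraction. A secondary and routine point is that the length-$2$ components of $\mathcal{L}(A;G)$, $\mathcal{M}(A;G)$, and $\mathcal{N}_m(A;G)$ (for $m\geqslant 2$) all retain the linear independence of $[a_k,a_l]$ for non-adjacent pairs, because the metabelian and nilpotent identities only impose relations in length $\geqslant 3$.
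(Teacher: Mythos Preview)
Your proposal is correct and follows essentially the same route as the paper: write $(a_i)_1=\alpha_i a_i+g_i$, use $[(a_i)_1,(a_j)_2]=0$ and $[(a_j)_1,(a_i)_2]=0$ for non-adjacent $a_i,a_j$, pass to the length-$2$ component, invoke Lemma~\ref{libyi} (together with the choice $a_i\notin\supp(\omega_1(g_i))$) to isolate the coefficient of $[a_i,a_j]$, obtain the system $\alpha_i(1-\alpha_j)=\alpha_j(1-\alpha_i)=0$, and then propagate the common value $\alpha\in\{0,1\}$ along paths in $\overline G$. The only cosmetic difference is that the paper writes out both equations of the system simultaneously before the length-$2$ truncation, whereas you truncate first; the logical content is identical.
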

   \begin{proof}
     For any
     $i=1,2,\dots, n$ we can write
     $(a_i)_1=\alpha_i a_i+g_i$, where
     $a_i\not \in \supp(\omega_1(g_i))$ (see the proof of Lemma~%
\ref{libyi}). Then
     $(a_i)_2=(1-\alpha_i)a_i-g_i$.

     Since
     $[(a_i)_1,(a_j)_2]=[(a_j)_1,(a_i)_2]=0$ for any
     $i,j \in \{1,2, \dots, n\}$, we have
     \begin{eqnarray}
       & &\begin{cases}
           [\alpha_i a_i+g_i, (1-\alpha_j)a_j-g_j]=0 \\
           [\alpha_j a_j+g_j, (1-\alpha_i)a_i-g_i]=0; \\
         \end{cases} \quad \Leftrightarrow \notag \\
       \label{ibyjsysgen}
       & \Leftrightarrow &\begin{cases}
           \hphantom{-}\alpha_i(1-\alpha_j)[a_i,a_j]+\alpha_i[g_j,a_i]+(1-\alpha_j)[g_i,a_j]-
             [g_i,g_j]=0 \\
           -\alpha_j(1-\alpha_i)[a_i,a_j]+\alpha_j[g_i,a_j]+(1-\alpha_i)[g_j,a_i]+[g_i,g_j]=0.
         \end{cases}
     \end{eqnarray}

     For any
     $i$ there exists
     $j$ such that
     $a_i \nleftrightarrow a_j$. Otherwise, there would be no
     edge incident to
     $a_i$ in
     $\overline{G}$ and
     $\{a_i\}$ would be a connected component of
     $\overline{G}$ that would contradict to the condition of the lemma.

     Since
     $L(A;G)$ is homogeneous we obtain
     \begin{equation}\label{ibyjsys}
       \begin{cases}
         \hphantom{-}\alpha_i(1-\alpha_j)[a_i,a_j]+\alpha_i[\omega_1(g_j),a_i]+
            (1-\alpha_j)[\omega_1(g_i),a_j]-[\omega_1(g_i),\omega_1(g_j)]=0 \\
           -\alpha_j(1-\alpha_i)[a_i,a_j]+\alpha_j[\omega_1(g_i),a_j]+
            (1-\alpha_i)[\omega_1(g_j),a_i]+[\omega_1(g_i),\omega_1(g_j)]=0. \\
       \end{cases}
     \end{equation}
     for any
     $i,j \in \{1,2, \dots, n\}$.

     Next,
     $[a_i,a_j]\neq 0$ because
     $a_i \nleftrightarrow a_j$. By Lemma~%
\ref{libyi} and by choosing
     $g_i$ and
     $g_j$ the products
     $[\omega_1(g_j),a_i]$,
     $[\omega_1(g_i),a_j]$, and
     $[\omega_1(g_i),\omega_1(g_j)]$ are linear combinations of monomials of the form
     $[a_p,a_q]$, where
     $\{p,q\}\neq \{i,j\}$. Consequently,
     $\alpha_i$ and
     $\alpha_j$ must satisfy the following system
     \begin{equation} \label{alphas}
       \begin{cases}
         \alpha_i(1-\alpha_j)=0\\
         \alpha_j(1-\alpha_i)=0.
       \end{cases}
     \end{equation}
     because conversely,
     $\alpha_i(1-\alpha_j)[a_i,a_j]$ in the first equation of
\eqref{ibyjsys} or
     $\alpha_j(1-\alpha_i)[a_i,a_j]$ in the second equation of the same system cannot
     cancel.

     From the first equation of the system
\eqref{alphas} either
     $\alpha_i=0$ or
     $1-\alpha_j=0$. In the former case,
     $1-\alpha_i=1$ and by the second equation of the system
     $\alpha_j=0$. So, we obtain
     $(g_i)_2=a_i-g_i$ and
     $(g_j)_2=a_j-g_j$. In the latter case,
     $1-\alpha_j=0$. Then
     $\alpha_j=1$, therefore by the second equation of
\eqref{alphas}
     $1-\alpha_i=0$ and we get
     $(g_i)_1=a_i+g_i$ and
     $(g_j)_1=a_j+g_j$.

     Let
     $a_i\leftrightarrow a_j$.  Since
     $\overline{G}$ is connected there is a sequence of vertices
     \begin{equation} \label{sequence}
       a_i=a_{i_0}, a_{i_1}, \dots a_{i_k}=a_j
     \end{equation}
     such that
     $a_{i_{p}}$ is adjacent to
     $a_{i_{p+1}}$ in  the graph
     $\overline{G}$ (i.e.
     $a_{i_{p}}$ is not conntected to
     $a_{i_{p+1}}$ in the graph
     $G$) for
     $p=0,1,\dots,k-1$.

     We need to show that if
     $(a_i)_1=a_i+g_i$ then
     $(a_j)_1=a_j+g_j$ and if
     $(a_i)_2=a_i-g_i$ then
     $(a_j)_2=a_j-g_j$. Let us prove this statement by induction on
     $k$.

     The basis
     ($k=1$) has already been proved above.

     Suppose that the statement holds for any sequence
     $a_{t_0}, s_{t_1}, \dots, a_{t_{k-1}}$ such that
     $a_{t_p}\nleftrightarrow a_{t_{p+1}}$ for
     $p=0,1,\dots, k-2$. Consider the sequence
\eqref{sequence}. If
     $(a_i)_1=a_i+g_i$ then
     $(a_{i_{k-1}})_1=a_{i_{k-1}}+g_{i_{k-1}}$  by the inductive hypothesis. Therefore,
     $(a_{i_k})_1=(a_{j})_1=a_{j}+g_{j}$ as above. The case
     $(a_i)_2=a_i-g_i$ is considered analogously.
   \end{proof}

   Lemma~%
\ref{laioneside} implies that without loss of generality we can assume that
     $a_i$ corresponds to the pair
     $(a_i+g_i,-g_i)$, where
     $a_i \not\in \supp(\omega_1(g_i))$, and
     $a_j$ corresponds to the pair
     $(a_j+g_j,-g_j)$, where
     $a_j \not\in \supp(\omega_1(g_j))$. So, for any
     $a_i,a_j\in A$ system of equations
 \eqref{ibyjsys} can be rewritten as
   \begin{equation} \label{libyjsysalpha10}
       \begin{cases}
       [g_j,a_i]-[g_i,g_j]=0 \\
       [g_i,a_j]+[g_i,g_j]=0.
       \end{cases}
    \end{equation}

   \begin{llll}\label{lnotomega1}
     Let
     $G$ be a graph with the set of vertices
     $A=\{a_1,a_2,\dots a_n\}$, where
     $n\geqslant 2$. If
     $\overline{G}$ is connected,
     $L(A;G)=L_1\oplus L_2$ and
     $g_i$ is defined as above then
     $\omega_1(g_i)=0$.
   \end{llll}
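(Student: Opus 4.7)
The plan is to suppose for contradiction that $\omega_1(g_i)\neq 0$ for some $i$, and to extract a contradiction from the length-two part of the system~\eqref{libyjsysalpha10}. Writing $v_i:=\omega_1(g_i)$, the bracket $[g_i,g_j]$ has length at least two, and its length-two component is $[v_i,v_j]$, so the length-two parts of the two equations read
$$
[v_j,a_i]=[v_i,v_j]\qquad\text{and}\qquad [v_i,a_j]=-[v_i,v_j].
$$
Adding them gives the key identity
$$
[v_i,a_j]+[v_j,a_i]=0,
$$
which holds in $L(A;G)$ for every pair $(i,j)$.

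Expanding $v_i=\sum_k\beta_{ik}a_k$, Lemma~\ref{laioneside} gives $\beta_{ii}=0$, while Lemma~\ref{libyi}, together with the trivial observation that $\supp(\omega_1(h))\subseteq \supp(O_{m-1}(h))$ and $\supp(\omega_1(h))\subseteq \supp(O_2(h))$, yields $\beta_{ik}=0$ whenever $a_i\nleftrightarrow a_k$. Thus any nonzero $\beta_{ik}$ forces $a_i\leftrightarrow a_k$ and in particular $k\neq i$. Assuming $v_i\neq 0$, fix such a $k$. Because $\overline{G}$ is connected with at least two vertices, $a_k$ has a neighbour $a_j$ in $\overline{G}$, i.e.\ $a_k\nleftrightarrow a_j$ in $G$; automatically $j\neq k$, and $j\neq i$ because $a_i\leftrightarrow a_k$ in $G$ means $a_i$ is not a $\overline{G}$-neighbour of $a_k$. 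So $i,j,k$ are pairwise distinct.

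The length-two homogeneous component of $L(A;G)$ admits, in each of the three varieties, the common $R$-basis consisting of the brackets $[a_p,a_q]$ with $a_p\nleftrightarrow a_q$, modulo antisymmetry (for $\mathcal{M}(A;G)$ this is visible from Theorem~\ref{metabbase} applied to the multi-degrees of total length two, and for $\mathcal{N}_m(A;G)$ with $m\geqslant 2$ the length-two component is unaffected by nilpotency). I would then read off the coefficient of the basis element $[a_k,a_j]$ in the displayed identity. In $[v_i,a_j]=\sum_{k'}\beta_{ik'}[a_{k'},a_j]$, only $k'=k$ contributes, yielding $\beta_{ik}$. In $[v_j,a_i]=\sum_l\beta_{jl}[a_l,a_i]$, a basis collision with $[a_k,a_j]$ forces $\{l,i\}=\{k,j\}$, so either $i=j$ (excluded) or $l=j$, and the latter contributes $\beta_{jj}=0$. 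Hence $\beta_{ik}=0$, contradicting the choice of $k$, so $\omega_1(g_i)=0$ for all $i$.

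The one delicate point is this coefficient bookkeeping: I must be certain that no other term in either sum can collide with $[a_k,a_j]$ in the length-two basis. This rests on the vanishing $\beta_{ll}=0$ for every $l$ (supplied by Lemma~\ref{laioneside}) and on the strict inequality $i\neq j$, which is precisely what connectedness of $\overline{G}$ is used to secure via the auxiliary vertex $j$. The rest of the argument transfers verbatim between the three varieties because the length-two part of $L(A;G)$ is identical in each.
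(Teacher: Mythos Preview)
Your proof is correct, and it is a close variant of the paper's argument rather than a genuinely different route. Both proofs work entirely in the length-two component and reach a contradiction by exhibiting a bracket $[a_k,a_j]$ with $a_k\nleftrightarrow a_j$ whose coefficient cannot vanish. The tactical difference is this: the paper keeps only the second equation of~\eqref{ibyjo1sys}, namely $[\omega_1(g_i),a_j]+[\omega_1(g_i),\omega_1(g_j)]=0$, and must therefore argue that the cross term $[\omega_1(g_i),\omega_1(g_j)]$ cannot contain the target bracket; this is what forces the paper's more elaborate choice of $a_j$ (walk along a $\overline{G}$-path out of $\supp(\omega_1(g_i))$ so that $a_j\notin\supp(\omega_1(g_i))\cup\supp(\omega_1(g_j))$). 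You instead \emph{add} the two equations, so the cross term disappears and you are left with $[v_i,a_j]+[v_j,a_i]=0$; then any $\overline{G}$-neighbour $a_j$ of $a_k$ suffices, and the coefficient of $[a_k,a_j]$ is immediately $\beta_{ik}$ because $i\notin\{k,j\}$ rules out every contribution from $[v_j,a_i]$. (Your case analysis ``either $i=j$ or $l=j$'' is slightly redundant: in fact $i\notin\{k,j\}$ already makes $\{l,i\}=\{k,j\}$ impossible, so no term of $[v_j,a_i]$ can collide at all.) Your version is a little shorter and avoids the path-walk; the paper's single-equation template, on the other hand, is what gets reused verbatim in the subsequent Lemmas~\ref{labs}--\ref{lmetab}.
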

   \begin{proof}
      Let
      $a_i=(a_i+g_i,-g_i)$. It follows from
\eqref{ibyi} that
     $[g_i,a_i]=0$, consequently
     $[\omega_1(g_i), a_i]=0$.  So, by Theorem~%
\ref{central} we get
     $a_i \leftrightarrow\supp(\omega_1(g_i))$.

     Next, we can rewrite
\eqref{ibyjsys} as follows
     \begin{equation}\label{ibyjo1sys}
       \begin{cases}
         [\omega_1(g_j),a_i]-[\omega_1(g_i),\omega_1(g_j)]=0 \\
         [\omega_1(g_i),a_j]+[\omega_1(g_i),\omega_1(g_j)]=0.
       \end{cases}
     \end{equation}

     Suppose that
     $\omega_1(g_i)\neq 0$. Let us show that in this case there is
     $j\neq i$ such that
     $a_j$ is adjacent to a vertex in
     $\supp(\omega_1(g_i))$. Take
     $a_r \in \supp(\omega_1(g_i))$. Since
     $\overline{G}$ is con\-nec\-ted, there is a sequence of vertices
     $a_r=a_{i_0},a_{i_1}, \dots, a_{i_k}=a_i$ such that for any
     $p\in\{0,1,\dots k-1\}$ the vertices
     $a_{i_p}$ and
     $a_{i_{p+1}}$ are adjacent  in
     $\overline{G}$ or, equivalently,
     $a_{i_p} \nleftrightarrow a_{i_{p+1}}$. Let
     $s$ be the smallest number such that
     $a_{i_s}\not \in \supp(\omega_1(g_i))$. Such number exists since
     $a_i=a_{i_k} \leftrightarrow \supp(\omega_1(g_i))$ and therefore
     $a_{i} \not \in \supp(\omega_1(g_i))$. On the other hand,
     $i_s\neq i$ because
     $a_{i_s} \nleftrightarrow a_{i_{s-1}}$ while
     $a_i \leftrightarrow a_{i_{s-1}}$. We may assume that
     $j=i_s$.

     Consider the second equation of
\eqref{ibyjo1sys}. Note that
     $[\omega_1(g_i),\omega_1(g_j)]$ is a linear combination of elements of the form
     $[a_q,a_t]$ and since
     $a_j \not \in \supp(\omega_1(g_i)) \cup \supp(\omega_1(g_j))$, neither
     $a_q$ nor
     $a_t$ can be equal to
     $a_j$. Consequently, the element
     $\beta [a_{i_{s-1}},a_j]$ for
     $\beta\neq 0$ is among the summands in
     $[\omega_1(g_i),a_j]$ but no monomial of the form
     $\gamma [a_{i_{s-1}},a_j]$ or
     $\gamma [a_j,a_{i_{s-1}}]$ for
     $\gamma \neq 0$ exists among the summands in
     $[\omega_1(g_i),\omega_1(g_j)]$. Since
     $[a_{i_{s-1}},a_j]\neq 0$,
     $[\omega_1(g_i),a_j]$ cannot be equal to
     $0$. Therefore
     $[\omega_1(g_i),a_j]+[\omega_1(g_i),\omega_1(g_j)]\neq 0$. We get a contradiction to
\eqref{ibyjo1sys}.

     So, if
     $L(A;G)=\mathcal{L}(A;G)$ then we are done. If
     $L(A;G)=\mathcal{M}(A;G)$ or
     $L(A;G)=\mathcal{N}_m(A;G)$ for
     $m \geqslant 2$, then the statement follows from the fact that the restrictions of
     the natural homomorphisms
     $\mathcal{L}(A;G) \to \mathcal{N}_m(A;G)$ and
     $\mathcal{L}(A;G) \to \mathcal{M}(A;G)$ to the maps
     $\mathcal{L}_m(A;G) \to \mathcal{N}_m(A;G)$ and
     $\mathcal{L}_3(A;G) \to \mathcal{M}(A;G)$ are bijections.
   \end{proof}

   \begin{llll}\label{labs}
     Let
     $G$ be a graph with the set of vertices
     $A=\{a_1,a_2,\dots a_n\}$, where
     $n\geqslant 2$. If
     $\overline{G}$ is connected,
     $\mathcal{L}(A;G)=L_1\oplus L_2$, and
     $g_i$ be defined as above then
     $g_i=0$ for all
     $i\in\{1,2,\dots, n\}$.
   \end{llll}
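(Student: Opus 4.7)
The plan is a proof by contradiction using the length grading. By Lemma~\ref{lnotomega1} each $g_i$ has $\omega_1(g_i)=0$, so if some $g_i$ is non-zero then the quantity $k:=\min\{k_i : g_i\ne 0\}$ is well-defined and satisfies $k\ge 2$, where $k_i$ denotes the lowest length appearing in $g_i$. I would fix an index $j$ with $k_j=k$, making $f:=\omega_k(g_j)$ a non-zero multi-homogeneous element of length $k\ge 2$.

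The first key step is to extract information from~\eqref{libyjsysalpha10}. Its first equation, applied with this $j$ and an arbitrary $i$, reads $[g_j,a_i]=[g_i,g_j]$. Comparing length-$(k+1)$ components: on the left this component is $[\omega_k(g_j),a_i]=[f,a_i]$, because $g_j$ has minimal length $k$ and $a_i$ has length $1$; on the right, the bracket $[g_i,g_j]$ has minimal length at least $k_i+k\ge 2k\ge k+2$ when $g_i\ne 0$ (and is zero when $g_i=0$), so its length-$(k+1)$ component vanishes. Since $\mathcal{L}(A;G)$ is multi-homogeneous, length components of an identity may be equated, giving $[f,a_i]=0$ for every~$i$.

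To reach a contradiction, I would pick any $a_r\in\supp(f)$ (non-empty because $f\ne 0$) and apply Theorem~\ref{central} to $f$. Since $f$ is multi-homogeneous and non-zero, its support is $\overline G$-connected, so Theorem~\ref{central} produces a single component $f_1=f$. The assumption $a_r\in C(f)$ then requires a decomposition $a_r=h_1+h'$ with $h_1\backsim f$ or $h_1=0$, and with $\supp(f)\leftrightarrow\supp(h')$ when $h'\ne 0$. The alternative $h_1\backsim f$ forces $h_1$ to have the multi-degree of $f$ (hence length $k\ge 2$), so $h_1$ cannot supply the multi-degree-piece corresponding to $a_r$; then $h'$ must, placing $a_r\in\supp(h')\cap\supp(f)$ and demanding the forbidden adjacency $a_r\leftrightarrow a_r$. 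The alternative $h_1=0$ directly makes $\supp(h')=\{a_r\}\subseteq\supp(f)$, again requiring $a_r\leftrightarrow a_r$. Either way we contradict the no-loop hypothesis, so $g_i=0$ for all $i$.

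The main obstacle is the length-comparison step: it is legitimate only because $\mathcal{L}(A;G)$ is multi-homogeneous and every length class is a union of multi-degree classes, so length-graded components of any identity can be extracted separately. Once this is granted, the contradiction is a direct unfolding of the centralizer structure supplied by Theorem~\ref{central}.
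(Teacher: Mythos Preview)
Your overall strategy is sound and genuinely different from the paper's, but there is a gap in the final step that you should repair.

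\textbf{The gap.} You write that $f:=\omega_k(g_j)$ is ``a non-zero multi-homogeneous element''. This is not true in general: $\omega_k$ extracts the part of total length~$k$, which is length-homogeneous but may mix several multi-degrees (for example $[a_1,a_2]+[a_1,a_3]$ is length-$2$ homogeneous but not multi-homogeneous). You then use multi-homogeneity to conclude that $\overline G(\supp(f))$ is connected, so that Theorem~\ref{central} gives a single component. That inference is unjustified as written. Moreover, even for a genuinely multi-homogeneous element, the implication ``non-zero multi-homogeneous $\Rightarrow$ $\overline G$-connected support'' is a fact that requires a short argument (it follows because a Lie monomial whose support splits into two $G$-complete parts vanishes), not something Theorem~\ref{central} states directly.

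The fix is easy and does not disturb your plan. Since $[f,a_i]=0$ for every $i$ and $\mathcal L(A;G)$ is multi-graded, each multi-homogeneous component $f_\delta$ of $f$ also satisfies $[f_\delta,a_i]=0$ for every $i$; replace $f$ by any non-zero such $f_\delta$. Alternatively, keep $f$ and allow several components $f=\sum_l f_l$ in Theorem~\ref{central}: each $f_l$ is still length-$k$ homogeneous, so any $h_l\backsim f_l$ has length $k\ge 2$ and the contradiction ``$a_r$ must appear in $h'$ with $a_r\leftrightarrow a_r$'' goes through unchanged.

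\textbf{Comparison with the paper's proof.} The paper does not pass to a minimal length. Instead it fixes $i$ with $g_i\ne 0$, chooses a specific $j$ with $a_j\notin\supp(g_i)$ but $\overline G$-adjacent to $\supp(g_i)$, and \emph{adds} the two equations of~\eqref{libyjsysalpha10} so that the awkward term $[g_i,g_j]$ cancels exactly, yielding $[o_1(g_j),a_i]+[o_1(g_i),a_j]=0$. It then uses that $a_i\notin\supp(g_i)$ and $a_j\notin\supp(g_j)$ (both consequences of~\eqref{ibyi} via Theorem~\ref{central}) to see that the two brackets live in disjoint multi-degree classes, so they cannot cancel; since $[o_1(g_i),a_j]\ne 0$ by Theorem~\ref{central}, this is a contradiction. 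Your approach instead kills $[g_i,g_j]$ by a length count (it has length $\ge 2k>k+1$) and obtains the stronger conclusion $[f,a_i]=0$ for \emph{all} $i$, i.e.\ $f$ is central; the contradiction then comes from analysing the centralizer of $f$. Both routes rely on Theorem~\ref{central}, but at different points: the paper uses it to produce a single non-vanishing bracket, while you use it to rule out a non-trivial central element of length $\ge 2$.
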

   \begin{proof}
     As we have shown in Lemma~%
\ref{lnotomega1},
     $g_i=o_1(g_i)$ for
     $i=1,2,\dots, n$. So,
\eqref{libyjsysalpha10} can be rewritten as
     \begin{equation}\label{libyjsysalpha10o1}
       \begin{cases}
         [o_1(g_j),a_i]-[o_1(g_i),o_1(g_j)]=0 \\
         [o_1(g_i),a_j]+[o_1(g_i),o_1(g_j)]=0
       \end{cases}.
     \end{equation}
     Suppose that
     $o_1(g_i)\neq 0$. Then there exists
     $j\neq i$ such that
     $a_j\not \in \supp(o_1(g_i))$  but it is adjacent to one of the vertices in
     $\supp(o_1(g_i))$. The proof is similar to that in Lemma~%
\ref{lnotomega1}.

     Summing the equations in
\eqref{libyjsysalpha10o1} we get
     \begin{equation}\label{sumo1ibyj}
       [o_1(g_j),a_i]+[o_1(g_i),a_j]=0.
     \end{equation}

     By Theorem~%
\ref{central} implies
    \begin{equation}\label{o1ibyj}
      [o_1(g_i),a_j]\neq 0.
    \end{equation}
    On the other hand, by
\eqref{ibyi}
     $a_j\not \in \supp([o_1(g_j)])$ and
     $a_i\not \in \supp([o_1(g_i)])$. Thus
     the multi-degrees of the summands of
     $[o_1(g_j),a_i]$ are not equal to the multi-degrees of the summands of
     $[o_1(g_i),a_j]$ we obtain a contradiction to
\eqref{sumo1ibyj}.
   \end{proof}
   \begin{llll}\label{lnilp}
     Let
     $G$ be a graph with the set of vertices
     $A=\{a_1,a_2,\dots a_n\}$, where
     $n\geqslant 2$. If
     $\overline{G}$ is connected,
     $\mathcal{N}_m(A;G)=L_1\oplus L_2$, where
     $m\geqslant 2$, and
     $g_i$ be defined as above then
     $O_{m-1}(g_i)=0$ for all
     $i\in\{1,2,\dots, n\}$.
   \end{llll}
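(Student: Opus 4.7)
The plan is to prove, by induction on $k$, that $\omega_k(g_i)=0$ for every $i$ and every $k\in\{1,2,\dots,m-1\}$; summing these then yields $O_{m-1}(g_i)=0$ as required. The base case $k=1$ is exactly the statement of Lemma~\ref{lnotomega1}. For the inductive step, fix $k$ with $2\leqslant k\leqslant m-1$ and assume $\omega_l(g_i)=0$ for every $i$ and every $l<k$. Then each $g_i$ has no terms of length less than $k$, so the nonlinear bracket $[g_i,g_j]$ starts in length $2k\geqslant k+2$, while $[g_j,a_i]$ starts in length $k+1$. Extracting the length-$(k+1)$ component of each equation of~\eqref{libyjsysalpha10} and invoking multi-homogeneity, the quadratic pieces drop out and one is left with
\begin{equation*}
  [\omega_k(g_j),a_i]=0\qquad\text{for all }i,j\in\{1,\dots,n\}.
\end{equation*}

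To analyze this relation I would invoke Theorem~\ref{central}, lifting $\omega_k(g_j)$ from $\mathcal{N}_m(A;G)$ to $\mathcal{L}_m(A;G)$ along the bijection recorded in the proof of Lemma~\ref{libyi}; the lift is legitimate because $[\omega_k(g_j),a_i]$ has length $k+1\leqslant m$, so both sides sit inside the bijective range, and vanishing in $\mathcal{N}_m(A;G)$ transfers to vanishing in $\mathcal{L}(A;G)$. The theorem then characterizes $C(\omega_k(g_j))$ through the components of $\overline{G}(\supp(\omega_k(g_j)))$: if $a_i$ were in $\supp(\omega_k(g_j))$, the piece of $\omega_k(g_j)$ supported on the component containing $a_i$ would have to be $\backsim a_i$, which is impossible since that piece is multi-homogeneous of length $k\geqslant 2$. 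Hence $a_i\notin\supp(\omega_k(g_j))$ for every $i\in\{1,\dots,n\}$, which forces $\supp(\omega_k(g_j))=\varnothing$ and so $\omega_k(g_j)=0$. This closes the induction.

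The genuine obstacle is the $k=1$ layer, where $2k=k+1$ and the quadratic term $[\omega_1(g_i),\omega_1(g_j)]$ is not degree-separated from $[\omega_1(g_j),a_i]$; this is precisely why that case had to be isolated as Lemma~\ref{lnotomega1}, and why one cannot shortcut the induction by a direct imitation of Lemma~\ref{labs}. A second, more intrinsic point---and the reason the conclusion is $O_{m-1}(g_i)=0$ rather than $g_i=0$---is that once $k=m$, the bracket $[\omega_m(g_j),a_i]$ has length $m+1$ and vanishes automatically in $\mathcal{N}_m(A;G)$, so the argument yields no constraint on the length-$m$ part of $g_i$.
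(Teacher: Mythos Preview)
Your proof is correct and mirrors the paper's: both argue by induction on the length $k$ (base case Lemma~\ref{lnotomega1}), use the inductive hypothesis to push $[g_i,g_j]$ into length $\geqslant 2k>k+1$ so that the length-$(k+1)$ part of \eqref{libyjsysalpha10} collapses to $[\omega_k(g_j),a_i]=0$, and then invoke Theorem~\ref{central} (after lifting along the bijection $\mathcal{L}_m(A;G)\to\mathcal{N}_m(A;G)$) to force $\omega_k(g_j)=0$. The only difference is cosmetic---the paper picks a single $a_j\in\supp(\omega_{k+1}(g_i))\setminus\{a_i\}$ to obtain a contradiction, while you let $a_i$ range over all of $A$ and conclude the support is empty---but the substance is identical.
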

   \begin{proof}
     To prove this statement by induction
     it suffices to show that if
     $O_k(g_i)=0$ for all
     $i\in\{1,2,\dots, n\}$ for some
     $k$ such that
     $1\leqslant k \leqslant m-2$, then
     $\omega_{k+1}(g_i)=0$ for all
     $i\in\{1,2,\dots, n\}$.

     Suppose that
     $O_k(g_i)=0$ and
     $\omega_{k+1}(g_i)\neq 0$ for some
     $i\in \{1,2,\dots, n\}$ and for some
     $k \in \{1,2,\dots m-1\}$. Since
     $\mathcal{N}_m(A;E)$ homogeneous the second equation in
\eqref{libyjsysalpha10} implies
     \begin{equation} \label{omegapreq0}
       [\omega_{k+1}(g_i),a_j]=0.
     \end{equation}
     Indeed,
     since
     $O_k(g_i)=O_k(g_i)=0$, the product
     $[g_i,g_j]=[O_{k+1}(g_i),O_{k+1}(g_j)]$ is represented as a linear combination
     of monomials of degrees at least
     $(k+1)+(k+1)=2k+2>k+2$.

     On the other hand,  there exists
     $j\neq i$ such that
     $a_j\in \supp(\omega_{k+1}(g_i))$. The proof is similar to that
     in Lemma~%
\ref{lnotomega1}. So, by Theorem~%
 \ref{central} we get
     \begin{equation}\label{omegasec}
       [\omega_{k+1}(g_i),a_j]\neq 0
     \end{equation}
     that contradicts to
\eqref{omegapreq0}. Therefore,
     $\omega_{k+1}(g_i)=0$. So,
     $O_{m-1}(g_i)=0$.
   \end{proof}
   \begin{llll}\label{lmetab}
     Let
     $G$ be a graph with the set of vertices
     $A=\{a_1,a_2,\dots a_n\}$, where
     $n\geqslant 2$. If
     $\mathcal{M}(A;G)=L_1\oplus L_2$, and
     $g_i$ be defined as above. Then
     $g_i=0$ for all
     $i\in\{1,2,\dots, n\}$.
   \end{llll}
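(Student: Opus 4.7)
The plan is to use the metabelian identity to reduce the bilinear system \eqref{libyjsysalpha10} to the linear conditions $[g_i,a_j]=0$, and then to derive $g_i=0$ by bracketing with the smallest vertex appearing in a given multi-homogeneous component and exploiting the explicit basis of Theorem~\ref{metabbase}.

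By Lemma~\ref{lnotomega1} we have $\omega_1(g_i)=0$, so each $g_i$ lies in the commutator subalgebra $[\mathcal{M}(A;G),\mathcal{M}(A;G)]$, which is abelian in the metabelian variety. Therefore $[g_i,g_j]=0$ for all $i,j$, and \eqref{libyjsysalpha10} collapses to $[g_i,a_j]=0$ for every $i,j\in\{1,\dots,n\}$. Suppose, for contradiction, that $g_i\neq 0$ for some $i$, and fix a multi-degree $\bar\delta$ with non-zero multi-homogeneous component $g_i^{\bar\delta}$. Let $b$ denote the smallest vertex of $A(\bar\delta)$, which lies in the main component $H_0$ of $G(A(\bar\delta))$. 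By Theorem~\ref{metabbase} one can write $g_i^{\bar\delta}=\sum_t c_t v_t$, where $v_t=[a_{i_1^t},b,a_{i_3^t},\dots,a_{i_r^t}]\in\mathfrak{B}_{\bar\delta}(A;G)$, $c_t\neq 0$, and each $a_{i_1^t}$ is the largest element of a distinct non-main component of $G(A(\bar\delta))$.

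Let $\bar\delta'$ be the multi-degree obtained from $\bar\delta$ by increasing the $b$-coordinate by one. The key step is to verify that $v_t\mapsto[v_t,b]$ sends $\mathfrak{B}_{\bar\delta}(A;G)$ bijectively into $\mathfrak{B}_{\bar\delta'}(A;G)$. The metabelian identity $[[x,y],[z,w]]=0$ together with Jacobi implies that in a left-normed product $[x,y,z_1,\dots,z_k]$ the entries $z_1,\dots,z_k$ commute among themselves; hence, appending $b$ to $v_t$ and sorting the trailing sequence merely inserts the new $b$ among the existing copies of $b$ at the beginning of that sequence, leaving the pair $(a_{i_1^t},b)$ at the head. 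Since $b\in A(\bar\delta)$ we have $A(\bar\delta')=A(\bar\delta)$, so the connected components of $G(A(\bar\delta'))$ coincide with those of $G(A(\bar\delta))$, and $a_{i_1^t}$ remains the largest element of its non-main component. Different basic monomials $v_t$ give different images because the first position $a_{i_1^t}$ is preserved.

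By linear independence of $\mathfrak{B}_{\bar\delta'}(A;G)$ we conclude $[g_i^{\bar\delta},b]=\sum_t c_t[v_t,b]\neq 0$. On the other hand $[g_i,b]=0$ decomposes as $\sum_{\bar\delta''}[g_i^{\bar\delta''},b]$, with each summand multi-homogeneous of a distinct multi-degree; multi-homogeneity then forces $[g_i^{\bar\delta},b]=0$, a contradiction. The main delicate point is verifying that $[v_t,b]$ satisfies all three conditions of Theorem~\ref{metabbase}, which reduces to the permutability of positions beyond the second in metabelian left-normed brackets together with the invariance of the support under the shift $\bar\delta\mapsto\bar\delta'$.
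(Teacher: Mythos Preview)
Your proof is correct and follows essentially the same approach as the paper's: both use the metabelian identity to eliminate the $[g_i,g_j]$ term in \eqref{libyjsysalpha10}, obtaining $[g_i,a_j]=0$ for all $j$, and then reach a contradiction by bracketing a nonzero multi-homogeneous component with the smallest vertex in its support and checking, via Theorem~\ref{metabbase}, that basis monomials are sent to distinct basis monomials. The only cosmetic difference is that the paper first selects a vertex $a_j\in\supp(o_1(g_i))\setminus\{a_i\}$ and re-orders $A$ so that $a_j$ is least, whereas you fix a multi-degree first and take $b$ to be the least vertex of its support in the given order; the underlying computation is identical.
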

   \begin{proof}
     As we have shown in Lemma~%
\ref{lnotomega1},
     $g_i=o_1(g_i)$ for
     $i=1,2,\dots, n$. So, as in Lemma~%
\ref{labs}, the system of equations
 \eqref{libyjsysalpha10o1} holds.
     Since the considered algebra is metabelian the second equation of
 \eqref{libyjsysalpha10o1} gives
     \begin{equation}\label{o1ibyjm}
       [o_1(g_i),a_j]=0.
     \end{equation}

     On the other hand, suppose that
     $o_1(g_i)\neq 0$. Then
     $|\supp(o_1(g_i)|\geqslant 2$. So there exists
     $a_j \in \supp(o_1(g_i))\backslash \{a_i\}$. Let us order the set
     $A$ in such a way that
     $a_j$ is the smallest element. There is the representation

     \begin{equation} \label{decommultihom}
       o_1(g_i)=\sum_{\overline{\delta}} g_{i,\overline{\delta}},
     \end{equation}  where
     $g_{i,\overline{\delta}}\in \mathcal{M}(A;G)$ is a multi-homogeneous  element of multi-degree
     $\overline{\delta}$.

     Let
     $g_{i,\overline{\delta}_0}$ be a summand in the right-hand side of
\eqref{decommultihom} such that
     $a_j\in \supp(g_{i,\overline{\delta}_0})$. Then we can write
     $g_{i,\overline{\delta}_0}=\sum_{p=1}^k \lambda_{p}[u_p]$, where
     $[u_p]$ are basis monomials with respect to the order described above and
     $\lambda_p\in R\backslash \{0\}$. Let
     $H_0,H_1,\dots,H_s$
     ($s\geqslant 1$) be connected components of the graph
     $G(\supp(g_{i,\overline{\delta}_0}))$. It can be assumed that
     $a_j \in H_0$. According to Theorem~%
\ref{metabbase} each
     $[u_p]$ has the form
     $[u_p]=[a_{i_{p,1}},a_j,a_{i_{p,3}},\dots,a_{p,t}]$, where
     $a_{i_{p,1}}>a_j$,
     $a_j\leqslant a_{i_{p,3}} \leqslant \dots a_{p_t}$, and
     $a_{i_{1,1}}, a_{i_{2,1}},\dots, a_{i_{k,1}}$ are the largest elements of some of the sets
     $A(H_1),A(H_2), \dots, A(H_s)$.
     We have
     \begin{equation*}
          [o_1(g_i),a_j]=\sum_{\overline{\delta}}[g_{i,\overline{\delta}},a_j],
     \end{equation*}
     where
     \begin{equation}\label{metabprodgen}
       \begin{split}
         [g_{i,\overline{\delta}_0},a_j]=&\sum_{p=1}^k \lambda_{p}[[u_p],a_j]=
         \sum_{p=1}^k \lambda_{p}[[a_{i_{p,1}},a_j,a_{i_{p,3}},\dots,a_{i_{p,t}}], a_j]=\\
         =&\sum_{p=1}^k \lambda_{p}[a_{i_{p,1}},a_j,a_j,a_{i_{p,3}},\dots,a_{i_{p,t}}].
       \end{split}
     \end{equation}

     Since
     $\supp(g_{i,\overline{\delta}_0})=\supp([u_p])=\supp([[u_p],a_j])$, the graph
     $G(\supp([[u_p],a_j]))$ coincides with the graph
     $G(\supp(g_{i,\overline{\delta}_0}))$, in particular, it has the same connected
     components. Therefore all monomials in
\eqref{metabprodgen} are pairwise distinct basis elements with respect to the order
     described above. So,
     $[g_{i,\overline{\delta}_0},a_j]\neq 0$. Since
     $\mathcal{M}(G;H)$ is homogeneous we obtain
     $[o_1(g_i),a_j]\neq 0$ that contradicts to
\eqref{o1ibyjm}. Therefore,
     $o_1(g_i)=g_i=0$.
   \end{proof}

   Now we are ready to proof the following theorem.
   \begin{ttt}
     Let
     $G$ be a graph with the set of vertices
     $A=\{a_1,a_2,\dots a_n\}$, where
     $n\geqslant 2$ and let
     $L(A;G)$ be a partially commutative Lie algebra
     $\mathcal{L}(A;G)$, or a partially commutative metabelian Lie algebra
     $\mathcal{M}(A;G)$, or a partially commutative nilpotent of degree
     $m\geqslant 2$ Lie algebra
     $\mathcal{N}_m(A;G)$. Then
     $L(A;G)$ splits into a direct sum of two non-zero subalgebras if and only if
     $\overline{G}$ is not connected.
   \end{ttt}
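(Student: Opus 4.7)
The argument naturally splits into the two implications. For the sufficiency direction (if $\overline{G}$ is disconnected), I would argue directly. Partition the vertex set as $A=A_1\sqcup A_2$ where $A_1,A_2$ are non-empty unions of connected components of $\overline{G}$. Then every vertex of $A_1$ is adjacent in $G$ to every vertex of $A_2$, so $[a,b]=0$ in $L(A;G)$ whenever $a\in A_1$ and $b\in A_2$. Letting $L_j$ be the subalgebra of $L(A;G)$ generated by $A_j$, the defining presentation of $L(A;G)$ splits into the presentations of $L(A_1;G(A_1))$ and $L(A_2;G(A_2))$ together with the commuting relations across the partition, so $L(A;G)=L_1\oplus L_2$ with both summands non-zero.

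For the necessity direction, suppose $\overline{G}$ is connected and $L(A;G)=L_1\oplus L_2$; the goal is to force one of $L_1,L_2$ to be zero. By Lemma~\ref{laioneside}, after possibly interchanging $L_1$ and $L_2$, we may assume $(a_i)_1=a_i+g_i$ and $(a_i)_2=-g_i$ for every $i$, with $a_i\notin\supp(\omega_1(g_i))$. If $L(A;G)=\mathcal{L}(A;G)$, Lemma~\ref{labs} gives $g_i=0$ for every $i$; if $L(A;G)=\mathcal{M}(A;G)$, Lemma~\ref{lmetab} gives the same conclusion. In either case $a_i\in L_1$ for every $i$, and since $A$ generates the algebra, $L_1=L(A;G)$ and $L_2=0$.

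The case $L(A;G)=\mathcal{N}_m(A;G)$ is where I expect the only non-routine step. Lemma~\ref{lnilp} only yields $O_{m-1}(g_i)=0$, so \emph{a priori} the top-length part $\omega_m(g_i)$ can be non-zero and $g_i=\omega_m(g_i)$. The key additional observation is that any length-$m$ element of $\mathcal{N}_m(A;G)$ is central, because bracketing it with any further generator produces a length-$(m+1)$ element which vanishes by nilpotency. Consequently each $g_i$ is central, and the bracket of any $m$ of the elements $a_{i_1}+g_{i_1},\dots,a_{i_m}+g_{i_m}\in L_1$ collapses to $[a_{i_1},\dots,a_{i_m}]$. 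Since length-$m$ brackets of generators span the whole length-$m$ component of $\mathcal{N}_m(A;G)$, each $g_i$ lies in $L_1$, whence $a_i=(a_i+g_i)-g_i\in L_1$. This forces $L_1=\mathcal{N}_m(A;G)$ and $L_2=0$. The main conceptual obstacle is recognising that the top-degree gap left open by Lemma~\ref{lnilp} is absorbed automatically, precisely because $L_1$ is a subalgebra and the residual perturbations $g_i$ live in the centre.
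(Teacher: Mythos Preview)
Your proposal is correct and follows essentially the same route as the paper: the easy direction and the cases $\mathcal{L}(A;G)$, $\mathcal{M}(A;G)$ are handled identically via Lemmas~\ref{laioneside}, \ref{labs}, \ref{lmetab}, and for $\mathcal{N}_m(A;G)$ both you and the paper use Lemma~\ref{lnilp} to reduce to $g_i=\omega_m(g_i)$, then observe that any Lie monomial of length $\geq 2$ in the $(a_j)_1$'s coincides with the same monomial in the $a_j$'s (you phrase this via centrality of length-$m$ elements, the paper via a short induction on monomials), forcing $g_i\in L_1\cap L_2=0$. The arguments are the same in substance; your centrality phrasing is a clean shortcut for the paper's inductive computation.
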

   \begin{proof}
     Suppose that
     $\overline{G}$ is connected and
     $L(A;G)=L_1 \oplus L_2$. Without loss of generality we can assume that
     $a_i$ corresponds to
     $(a_i+g_i, -g_i)$, where
     $a_i \not \in \supp(\omega_1(g_i))$.

     Let
     $L(A;G)$ is either
     $\mathcal{L}(A;G)$ or
     $\mathcal{M}(A;G)$. Then
     $g_i=0$ by Lemma~%
\ref{labs} or Lemma~%
 \ref{lmetab}. Hence
     $a_i \in L_1$ for
     $i=1,2,\dots, n$. Thus,
     $L_1=\mathcal{L}(A;G)$ and so
     $L_2=0$.

     Let
     $L(A;G)=\mathcal{N}_m(A;G)$. Since
     $a_i$ corresponds to
     $(a_i+g_i, -g_i)$, we obtain
     $g_i \in L_2$.

     On the other hand,
     Lemma~%
\ref{lnilp} implies that
     $O_{m-1}(g_i)=0$. Consequently,
     $[(a_p)_1,(a_q)_1]=[a_p+g_p,a_q+g_q]=[a_p+\omega_{m}(g_p),a_q+
        \omega_{m}(g_q)]=[a_p,a_q]$ for any
     $p,q \in \{1,2,\dots, n\}$.
     Now, let
     $[u(a_1,a_2,\dots a_n)]$ be a non-commutative non-associative monomial of cumulative
     degree at least 3. Then
     $[u(a_1,a_2,\dots,a_n)]=[[u_1(a_1,a_2,\dots, a_n)],[u_2(a_1,a_2,\dots, a_n)]]$. If the cumulative degrees of
     $[u_1(a_1,a_2,\dots,a_n)]$ and
     $[u_2(a_1,a_2,\dots,a_n)]$ are greater than 1, then by the inductive hypothesis
     $[u_l((a_1)_1,(a_2)_1,\dots, (a_n)_1)]=[u_l(a_1,a_2,\dots,a_n)]$ for
     $l=1,2$. Therefore,

     \begin{equation*}
       \begin{split}
         [u((a_1)_1,(a_2)_1,&\dots,(a_n)_1)]= \\
         =&[[u_1((a_1)_1,(a_2)_1,\dots, (a_n)_1)],[u_2((a_1)_1,(a_2)_1,\dots, (a_n)_1)]]=\\
         =& [[u_1(a_1,a_2,\dots,a_n)],[u_2(a_1,a_2,\dots,a_n)]]=\\
         =&[u(a_1,a_2,\dots,a_n)]
      \end{split}
    \end{equation*}
    in
    $\mathcal{N}_m(A;G)$. If
    $[u_2(a_1,a_2,\dots,a_n)]=a_q$ for some
    $q\in\{1,2,\dots, n\}$ then
    \begin{equation*}
      \begin{split}
        [u((a_1)_1,(a_2)_1,\dots,(a_n)_1)]= &[[u_1((a_1)_1,(a_2)_1,\dots, (a_n)_1)],(a_p)_1]=\\
         =&[[u_1((a_1)_1,(a_2)_1,\dots, (a_n)_1)],a_p+g_p]=\\
         =& [[u_1(a_1,a_2,\dots,a_n)],a_p+g_p]=\\
         =&[u_1(a_1,a_2,\dots,a_n),a_p]=\\
         =&[u(a_1,a_2,\dots,a_n)]
      \end{split}
    \end{equation*}
    in
    $\mathcal{N}_m(A;G)$.
 
    The case
    $[u_1]=a_p$ for some
    $p\in \{1,2,\dots, n\}$ is analogous.
    Therefore, for any non-commutative non-associative polynomial
    $f(a_1,\dots, a_n)$ such that
    $\omega_1(f(a_1,a_2,\dots, a_n))=0$ the equation
    $f((a_1)_1,(a_2)_1, \dots, (a_{n})_1)=f(a_1,a_2,\dots, a_n)$ holds. In particular it holds for any
    $g_i$. Consequently,
    $g_i \in L_1$ and so
    $g_i=0$.

    The converse is obvious. If
    $\overline{G}$ is not connected then
    $A=A_1\sqcup A_2$, where there are no edges
    $\{a,b\}$ such that
    $a\in A_1$ and
    $b\in A_2$. Then
    $A_1\leftrightarrow A_2$ and
    $L(A;G)=L(A_1;G(A_1))\oplus L(A_2;G(A_2))$.
   \end{proof}

   \noindent
   \scshape
   Novosibirsk State Technical University\\
   20 K. Marx ave.,\\ 
   Novosibirsk, 630073, Russia\\
   {\itshape e-mail}: \upshape \texttt{auto\_stoper@ngs.ru}
   \end{document}